\newtheorem{thm}{Theorem}
\theoremstyle{definition}
\newtheorem*{defn}{Definition}
\theoremstyle{remark}
\newtheorem*{qn}{Question}
\newtheorem*{rem}{Remark}
\title{On the  genus of infinite groups}
\author{Iain Aitchison}
\address{Department of Mathematics and Statistics \\ University of Melbourne\\
Parkville 3010,
Australia}
\email{I.Aitchison@ms.unimelb.edu.au}
\curraddr{School of Mathematics, Statistics and Operations Research\\
Victoria University of Wellington\\
Wellington 6140\\
New Zealand}
\author{Lawrence Reeves}
\address{Department of Mathematics and Statistics \\ University of Melbourne\\
Parkville, 3010
Australia}
\email{lreeves@unimelb.edu.au}
\newcommand{\emp}[1]{\textit{\textbf{#1}}}
\newcommand{\<}{\langle}
\renewcommand{\>}{\rangle}
\renewcommand{\|}{\mid}
\newcommand{\Z}{\mathbb{Z}}
\begin{document}
\maketitle

\begin{abstract}
We associate to each finite presentation of a group $G$ a compact CW-complex that is a 3-manifold in the complement of a point, and whose fundamental group is isomorphic to $G$. We use this complex to define a notion of genus for $G$ and give examples, and also define a notion of `closed group'. A group has genus 0 if and only if it is the fundamental group of a compact orientable 3-manifold.  
\end{abstract}

\section{Introduction}

In \cite{B1} Bridson remarks that
`if all groups are fundamental
groups of manifold with additional structure, then one might hope to use that structure
to prove interesting facts about arbitrary finitely presented groups.'
In this spirit, we show that every finitely presented group is the fundamental group of a `singular 3-manifold'-- a complex obtained from a three manifold by coning one boundary component.
Given a finite presentation $P$ for $G$, there is a standard presentation
2-complex $K_P$ with $\pi_1(K_P) \cong G$. The complex $X_P$ we construct
has spine $K_P$, and the construction generalizes for $n\geqslant 3$ to produce $X^n_P$,
 a compact $n$-dimensional manifold homotopy equivalent to
$K_P$. For $n\geqslant 5$, taking the boundary gives the standard construction
of a closed $(n-1)$-manifold with fundamental group $G$.

In the following,  `compact manifold'   allows non-empty boundary;  `closed manifold' means compact with empty boundary. Generally,  manifolds will be orientable and compact, unless stated otherwise. 
By a \emp{singular 3-manifold}, we mean a 
compact complex that is a 3-manifold away from finitely many points, the links of which being connected
compact orientable surfaces. The meanings of `closed', `orientable' and  `compact' have the obvious extension to this context; in particular, the boundary of a compact singular 3-manifold is a closed surface with a collar neighbourhood.

This paper is arranged as follows. In the second section we describe a construction that takes a finite presentation and produces a singular 3-manifold with fundamental group given by the presentation and give a definition of genus. In section 3 we gives some examples of calculating the genus of a presentation. Another observation based on the constructed singular 3-manifold is given in section 4, and some remarks and questions are discussed in the final section.
 

We note that a different notion of genus for groups was introduced in \cite{W} and has been studied by several authors. 
With this notion all infinite groups are either of genus zero or of infinite genus. This is not the case with the notion we define.  We give examples of infinite groups that have genus one.

\section{The construction}

\subsection{Context}

We first recall some standard constructions of complexes and manifolds with specified fundamental group: 
Given a finite presentation $P$ having $n$ generators and $k$ relators, one can
construct the standard 2-complex $K_P$, with one vertex, $n$ (labelled) 1-cells, and $k$ 2-cells attached  according to the given relations. 
An account of this well-known construction can be found in, for example,
\cite[III.2]{LS}.
This suggests a notion of genus for a presentation defined by taking the genus of the graph given by the link of the vertex in the presentation 2-complex.

\begin{defn}
The \emp{link genus} of a finite presentation $P$ is the 
genus of the graph given by the link of the vertex in $K_P$. 
The \emp{link genus of a group $G$} is then given by minimising over all finite presentations of $G$.
\end{defn}

If $G$ is 3-manifold group, the link genus will be zero, but 
the converse is false.
We refine this notion of genus in order to obtain an if and only if statement. 

By general position arguments, $K_P$
can be embedded in 5-dimensional Euclidean space.
A regular neighbourhood of the embedded 2-complex is a compact 5-manifold with spine the 2-complex, and  with boundary a closed 4-manifold whose fundamental group is that of the 2-complex. To see this, note that the any loop in the 2-complex can be pushed off in 5-space, and hence projected to the 4-manifold; conversely, suppose a loop in the 4-manifold bounds a disc in the 5-manifold. This disc can be perturbed to be disjoint from the spine, and hence projects to the boundary, whose fundamental group thus projects surjectively and injectively to that of the 5-manifold, which is homotopy equivalent to the 2-complex. The 5-manifold has a handlebody decomposition with one 0-handles, $n$ 1-handles and $k$ 2-handles. 

Not all 2-complexes can be embedded in standard 4-space: $2+2<4$ fails with catastrophic consequences. Nonetheless we can emulate the handlebody construction, and abstractly build a 4-manifold-with-boundary. In this case, the 4-manifold and boundary 3-manifold are determined by a link (disjointly embedded circles), defined up to homotopy, and an additional choice of an integer for each 2-handle. A given presentation thus gives rise to infinitely many possibilities:
Standard handlebody theory in dimensions 3 and 4 enables us to represent the $n$ 1-handles by a collection of unknotted, unlinked oriented circles in the 3-sphere, representing free-group generators. 
The relations are determined by attached $k$ discs to $k$ circles,
chosen to give a link of $n+k$ total components in $S^3$.

The construction of bounded $m$ manifolds,   $m\geqslant 4$, with spine the 2-complex, breaks down for $m=3$: this is because in general we cannot find a disjoint collection of embedded circles on the boundary of a 3-dimensional handelbody along which to attach the 2-handles. 
However,   we can always find immersed circles representing 
the relations in a groups presentation: this motivates the following construction.

\subsection{Construction}

When referring to a finite presentation we will 
consider both the generating set and the set of defining relations as ordered sets.
Let $P_G= \langle x_1,x_2,\dots , x_n \ | \ r_1, r_2,\dots, \ r_k \rangle$ be a finite (ordered) presentation for a group $G$. 

One can find a collection of $k$ immersed circles on a
3-dimensional handlebody of genus $n$
representing the relations. The handlebody can be constructed by adding $n$ 1-handles to $2n$ disjointly embedded discs on the 2-sphere. The circles representing 
relations can be assumed to meet the boundaries of all 1-handles in a collection of disjointly embedded arcs. These and all remaining arcs 
can be thickened on the surface to give a collection of ribbons. Each of these remaining arcs can be considered a (possibly) immersed 2-dimensional 1-handle attached to the collection of discs. Now forget the 2-sphere, but keep the $2n$ discs, with all remaining ribbons made disjoint and attached to the discs, creating an orientable surface with boundary. Fill in all boundary components by attaching discs to obtain a closed orientable surface (possibly disconnected) which we shall denote $\Omega$. Cone this surface to a point $\omega$, attach 3-dimensional 1-handles to pairs of discs, and attach 3-dimensional 2-handles along the embedded ribbon neighbourhoods of circles representing relations. The resulting complex  deformation retracts to the 2-complex spine $K_P$.
This is illustrated in Figure \ref{fig:2}

 \begin{figure}[htb] 
   \centering
{\small
\psfrag{w}{$\omega$}
\psfrag{a}{Add 2-handles along closed ribbons}
\psfrag{b}{Add $n$ 1-handles}
\psfrag{c}{\parbox[t]{4cm}{Add discs to obtain \\ closed ribbon surface  $\Omega$}}
\includegraphics[width=0.5\textwidth]{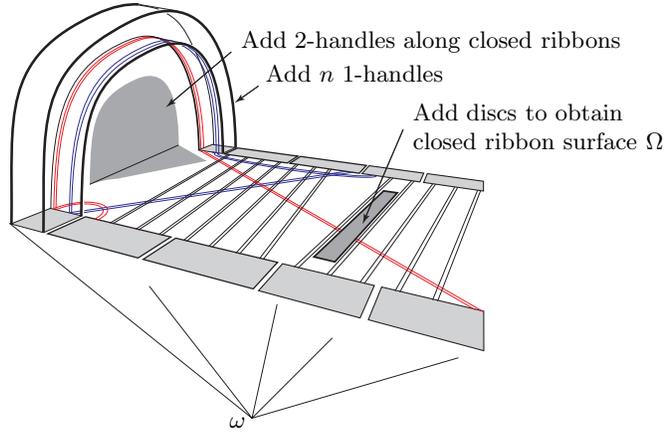}

            }   \caption{ \label{fig:2}
Surfaces and handles attached. The surface $\Omega$ is formed from the shaded discs, and coned to the pint $\omega$.}
   
\end{figure}

The description above involves a number a choices. 
We specify a choice 
as follows.
The ordering on the generators and relators enables us to associate a uniquely defined choice for $\Omega$ with a concrete realisation.
For technical convenience, we present the details under the assumption that all
relators have length greater than one.
 To each generator we take a pair of rectangles in the plane, arranged as two rows of $n$ rectangles. Each rectangle has a number of legs  determined by the total number of occurrences of $x_i$ in $r_1r_2\dots r_k$. The legs are attached 
to a horizontal edge. To these legs ribbons are then attached is the following way.  
In the listed relations for $P_G$, label occurrences of $x_i^\epsilon$ from left to right, from $1$ to $d_i$.
Let $d_{ij}$ denote the sum of magnitudes of exponents of all occurrences of $x_i$ in $r_j$.
Let 
$$d_i := \sum_{j=1}^k d_{ij}\qquad \qquad  
l_j := \sum_{i=1}^nd_{ij}\qquad \qquad
d := \sum_{i=1}^n   d_i \qquad \qquad  
l := \sum_{j=1}^kl_{j}
$$
Then $l_j$ is the length of $r_j$, and $d = l$ is the {\it length} of the presentation.
Accordingly we can write the relations in the  presentation as a concatenation of symbols of the form
 $ x_{i,p}^{\epsilon_{i,p}},\ \epsilon_{i,p} \in\{ -1, 1\} ,\ 1\leqslant p \leqslant d_i, \ 1\leqslant i \leqslant n$.
There is a ribbon from the leg labelled $x_{i,p}^{\epsilon_{i,p}}$ to the leg labelled $x_{j,q}^{-\epsilon_{j,q}}$ precisely when 
$x_{i,p}^{\epsilon_{i,p}}$ then $x_{j,q}^{\epsilon_{j,q}}$ appear
consecutively (possibly cyclically) in some relator.

\begin{figure}[htb] 
  \begin{center}
\psfrag{a}{$x_1$}
\psfrag{b}{$x_{2}$}
\psfrag{1}{1}
\psfrag{2}{2}
\psfrag{3}{3}
\psfrag{4}{4}
\psfrag{5}{5}
\psfrag{6}{6}
\psfrag{7}{7}
\psfrag{8}{8}
\includegraphics[width=2in]{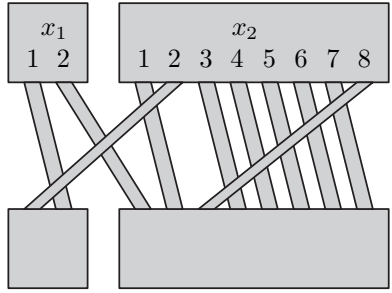}
\rule{0mm}{2cm}
\caption{\label{fig:Z6}
The presentation $\< x_1,x_2 \| x_1^2x_2^2=1, x_2^6=1\>$ of $\Z_6
\mathop{*}_{\Z_3}\Z_6$   has genus 1. Construction of the ribbon surface $\Omega$ is illustrated above.
The labels in the boxes are $x_{1,1}x_{1,2}$, and $x_{2,1}x_{2,2}x_{2,3}x_{2,4}x_{2,5}x_{2,6}x_{2,7}x_{2,8}$.
Corresponding to the first relator there are ribbons joining:
$x_{1,1}^{+}$ to $x_{1,2}^{-}$, $x_{1,2}^{+}$ to $x_{2,1}^{-}$,
 $x_{2,1}^{+}$ to $x_{2,2}^{-}$ and $x_{2,2}^{+}$ to $x_{1,1}^{-}$.
 Similarly, there are six ribbons corresponding to the second relator.
Notice that the presentation
$\< a,b \| a^2b^2=1, b^2=1\>$ has the same genus.}
\end{center}
\end{figure}

Let $\Omega$ ( or $\Omega_P$) be the closed surface obtained from the above constructed ribbon surface.
The surface  is not necessarily connected. Clearly this is the case if  the presentation is obviously a free product, with some generator not appearing in any relation. Note that by changing the presentation it can always be made connected.

\bigskip

\noindent
{\bf Lemma:} Every finitely presentable group has a presentation $P$ with   $\Omega_P$ connected.

\bigskip

\noindent
\begin{proof} Given an arbitrary finite  presentation $P_G$ as above, we may add a new generator $x_{n+1}$ and a new relation $x_{n+1}= x_1^2x_2^2 \dots x_n^2$. 
This  Tietze transformation and gives a new presentation for the same group. Since every generator occurs at least twice in the added relation, its corresponding annulus 
ensures that $\Omega$ is connected. Observe that given a presentation with a disconnected surface of genus $g$, we can obtain a presentation with a connected surface of genus $g$.
\end{proof}

From the construction above, and the observation of the preceding Lemma, we conclude the following.

\begin{thm}\label{thm:realisable}
Every finitely presentable group is the fundamental group of a compact singular 3-manifold having at most one singular point.
\end{thm}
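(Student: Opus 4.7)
The plan is to combine the explicit construction of Section 2.2 with the preceding Lemma to obtain the required singular 3-manifold, and then to verify the two assertions separately: (a) that the resulting complex $X_P$ is a singular 3-manifold with at most one singular point, and (b) that $\pi_1(X_P) \cong G$. Given $G$, I would first apply the Lemma to replace an arbitrary starting presentation by one $P = \langle x_1,\dots,x_n \mid r_1,\dots,r_k\rangle$ of $G$ for which the ribbon surface $\Omega_P$ is connected (and, by construction, closed and orientable). I then run the construction: cone $\Omega_P$ to a single point $\omega$, attach the $n$ three-dimensional 1-handles to the prescribed pairs of discs on $\Omega_P$, and attach the $k$ three-dimensional 2-handles along the ribbon neighbourhoods of the immersed circles representing the relators. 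Call the result $X_P$.

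For part (a), I would argue locally. Away from $\omega$ the open cone on $\Omega_P$ is homeomorphic to $\Omega_P \times (0,1]$, which is a 3-manifold with boundary $\Omega_P$. Attaching the 3-dimensional 1- and 2-handles along codimension-zero sub-surfaces of this boundary preserves the property of being a 3-manifold, since each attachment is a standard handle attachment in dimension three. Hence $X_P \setminus \{\omega\}$ is a 3-manifold, and its boundary (a closed orientable surface consisting of the unattached part of $\Omega_P$ together with the free faces of the handles) has a collar. The link of $\omega$ in $X_P$ is exactly $\Omega_P$, which is a closed connected orientable surface by the choice of $P$; so $\omega$ is a singular point of the required type. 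If $\Omega_P$ happens to be a 2-sphere, the cone is a 3-ball and $X_P$ is in fact a manifold, giving zero singular points --- hence \emph{at most} one singular point in general.

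For part (b), I would exhibit the spine. The cone on $\Omega_P$ deformation retracts to $\omega$; each 3-dimensional 1-handle deformation retracts onto its core 1-cell, producing a loop at $\omega$ labelled by the corresponding generator $x_i$; each 3-dimensional 2-handle deformation retracts onto a 2-cell whose attaching circle, by the way the ribbons on $\Omega_P$ were arranged to concatenate the symbols $x_{i,p}^{\epsilon_{i,p}}$, spells out exactly the corresponding relator $r_j$. Piecing these retractions together yields a deformation retraction of $X_P$ onto the presentation 2-complex $K_P$, and therefore $\pi_1(X_P) \cong \pi_1(K_P) \cong G$.

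The main obstacle is ensuring a single cone point: without the Lemma, a presentation for which $\Omega_P$ is disconnected would give a link at $\omega$ that is not a connected surface, violating the definition of a singular 3-manifold; the Lemma handles this obstruction by a Tietze transformation whose added relator forces connectivity of $\Omega_P$. The remaining steps --- handle attachments, collar neighbourhoods, and the deformation retraction --- are routine once the combinatorics encoded in $\Omega_P$ are in place.
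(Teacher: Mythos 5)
Your proposal is correct and follows exactly the route the paper takes: the paper's own justification is simply ``from the construction above, and the observation of the preceding Lemma, we conclude the following,'' i.e.\ the Section~2.2 construction combined with the connectedness Lemma. You have merely filled in the local-structure and deformation-retraction details that the paper leaves implicit.
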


If the coned-off boundary component, $\Omega$ is a 2-sphere, then the  
complex is a 3-manifold.
Since not all finitely presentable groups are 3-manifold groups, it is natural to ask
for the minimum achievable genus of $\Omega$. This gives a measure of how far the group is from being a 3-manifold group.

Given any closed orientable surface, possibly disconnected, by the genus of the surface 
we mean
the sum of the genera of its connected components.

\begin{defn}
The \emp{genus of a finite presentation} $P$ is the 
genus of the the surface $\Omega_P$ (connected or not) constructed above.
The \emp{$T$-genus of a group $G$} is then given by minimising over all finite presentations of $G$.
\end{defn}

Any two presentations are related by a finite sequence of Teitze transformations. 
For a fixed presentation 
the $T$-genus will not necessarily be equal to the link-genus same since there is 
extra restriction on the cyclic ordering about a vertex. The link-genus is clearly bounded above by the $T$-genus.

By rearranging the order of attachment of ribbons to each pair of discs, it is possible that a
ribbon surface  $\Omega_P'$ of genus higher or  lower than $\Omega_P$ may be obtained. We call such a rearrangement a  
 \emp{shuffle}, enabling us to     arrive at our final definition.

\begin{defn}
The \emp{genus of a group $G$} is defined to be the minimum genus achievable
for $\Omega'_P$ over all choices of presentation for $P$ and all shuffles.
\end{defn}

Note that the genus is obtained from the $T$-genus by allowing reordering
of ribbon attachments, simultaneously on each pair of discs. The link genus  
allows uncorrelated reordering. Thus

$$ 
\emph{link\ genus}(G)\  \leq \  \emph{genus}(G)\  \leq \  T{\rm -}\emph{genus}(G) \ .
$$

As an example, for the presentation of $\Z_6
\mathop{*}_{\Z_3}\Z_6$ in Figure \ref{fig:Z6}, the link genus is 0.

\begin{thm} \label{thm:3man}
A finitely presented group has genus zero if and only if it is the fundamental group of a compact, connected orientable 3-manifold.
\end{thm}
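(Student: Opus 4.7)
The plan is to prove both directions by leveraging the handle-theoretic nature of the construction.

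For the easier direction, suppose that $G$ has genus zero. Then there is a presentation $P$ and a shuffle for which $\Omega_P'$ is closed orientable of total genus $0$, hence a disjoint union of $2$-spheres. Invoking the observation at the end of the Lemma's proof (which promises a genus-preserving way to make $\Omega$ connected), I may assume $\Omega_P'$ is a single $2$-sphere. The cone on $S^2$ is a $3$-ball, so the sole potentially singular point $\omega$ is in fact a manifold point. The remaining ingredients of the construction, namely attaching $3$-dimensional $1$- and $2$-handles, are performed along manifold loci, so the resulting complex $X_P$ is a genuine compact, connected, orientable $3$-manifold with $\pi_1(X_P) \cong G$ by construction.

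For the converse, suppose $G = \pi_1(M)$ with $M$ a compact, connected, orientable $3$-manifold. I fix a handle decomposition of $M$ with one $0$-handle, $n$ one-handles, $k$ two-handles, and possibly some $3$-handles, the last of which do not affect $\pi_1$ and may be discarded. The union $H$ of the $0$-handle and $1$-handles is a genus-$n$ handlebody whose boundary $\Sigma_n$ plays the role of a Heegaard surface, and the $2$-handles are attached along a collection of $k$ pairwise disjoint embedded circles on $\Sigma_n$. Reading generators off the $1$-handles and relators off these circles yields a presentation $P$ of $G$ to which the paper's construction applies.

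The key step is to show that, for this presentation, a suitable shuffle produces a planar ribbon surface. I would identify the abstract handlebody appearing in the construction (obtained by adding $n$ one-handles to $2n$ discs on $S^2$) with $H$, so that the relation circles become embedded, pairwise disjoint circles on $\Sigma_n$. Their arcs lying in the planar region obtained by removing the $2n$ discs from $S^2$ are then already embedded and pairwise disjoint in the plane, and the cyclic orders in which they meet the boundary circles of the discs prescribe the shuffle to be used. Capping off boundary circles with discs yields a closed orientable surface $\Omega_P'$ which embeds in $S^2$, hence is a disjoint union of $2$-spheres, so genus$(G)=0$.

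The main obstacle is verifying the compatibility between the cyclic orderings determined by the planar picture in $S^2$ and those achievable by a shuffle in the paper's sense; in particular, one should check that shuffling ribbons around each disc pair does not alter the fundamental group of the constructed complex, so that shuffled presentations indeed still represent $G$. A secondary technical point is that the construction assumed all relators have length at least two; any length-one relators arising from the handle decomposition can be eliminated via Tietze transformations before applying the construction.
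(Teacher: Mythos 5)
Your proposal is correct and follows essentially the same route as the paper: the forward direction uses that the cone on a $2$-sphere is a $3$-ball, and the converse uses a handle decomposition of $M$ (one $0$-handle/deleted ball, $n$ $1$-handles, $k$ $2$-handles, discarding $3$-handles) to exhibit the attaching data as a planar ribbon picture on $S^2$, realized by an appropriate shuffle. The compatibility issue you flag at the end is exactly the point the paper itself raises as the reason shuffles are built into the definition of genus, so no gap remains.
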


\begin{proof}
Since the genus of an orientable surface  is bounded below by 0, and the cone on a 2-sphere is a 3-dimensional ball, a group of genus 0 is clearly the fundamental group of a compact orientable 3-manifold. 

Conversely, if $M$ is a connected, compact orientable 3-manifold (possibly closed), delete an open ball neighbourhood of an interior point with boundary 2-sphere  $\Omega$. A handlebody structure exists for $M$ which consist of taking $\Omega \times I$, attaching some number $n$ of 1-handles to $\Omega \times  \{ 1 \}$, and then attaching $k$ 2-handles and a number of 3-handles. Attaching the 3-handles does not change the fundamental group. 
The $n$ 1-handles give $2n$ discs on the 2-sphere; the attaching circles for the 2-handles give rise to a collection of ribbons on the 2-sphere attached to the boundaries of the 1-handle attaching-discs. Hence the union of discs and ribbons defines a genus 0 surface.

On the other hand, an application of the Seifert-van Kampen Theorem gives a presentation of $\pi_1(M)$ with $n$ generators and $k$ relations, which we can 
order in any way we choose. However, the construction applied to the presentation 
yields a ribbon surface whose ribbon-attaching order to each of the 1-handle disc boundary circles may differ by some permutation, giving a surface of possibly higher genus.  This is why we allow the ribbons
to be reattached by arbitrary shuffles: it is unclear to the authors whether or not Tietze transformations alone suffice.

\end{proof}

 Similarly, it is clear that a group of genus $g$ can be realized as the fundamental group of a compact orientable 3-manifold with a connected boundary surface $\Omega$ of genus $g$ coned to a point.


\section{Examples}

\subsection{Genus 0}

If $G$ is the group of a closed non-orientable surface, the construction based on a standard 1-vertex polygon presentation yields a twisted I-bundle over the surface, which is an orientable 3-manifold, and hence of genus 0. 

\begin{qn}
 What is the genus of the fundamental group of a non-orientable   3-manifold?
\end{qn}

\subsection{ Groups of genus 1  }

It is well known that not all finitely presented groups are 3-manifold groups and hence there exist finitely presentable groups having genus at least one.
Clearly the next most important class to study, after genus 0 groups,
is the class of genus 1 groups. Since these are the quotients of 3-manifold groups by the normal closure of a peripheral $Z\oplus Z$ subgroup, there is some hope in adapting 3-manifold techniques to their study.





  {\bf  Baumslag-Solitar groups:} 
%
%
%
The presentation $\< x_1, x_2 \mid x_1x_2^mx_1^{-1}x_2^{-n}\>$ has genus 1 (unless
$|m|=|n|=1$). The construction for $m=2$, $n=3$ is shown in Figure \ref{fig:BS}.
\begin{figure}
\psfrag{a}{$a$}
\psfrag{b}{$b$}
\begin{center}
\includegraphics[height=2in]{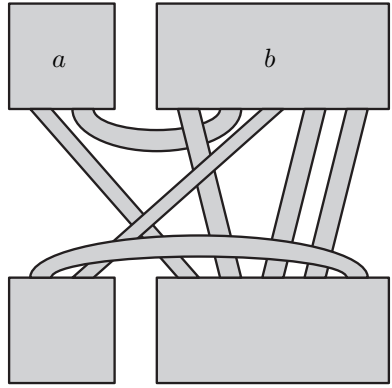}
\caption{\label{fig:BS} The surface for the presentation
$\<a,b\| ab^2a^{-1}b^{-3}=1\>$ has genus 1.}
\end{center}
\end{figure}

Concerning the existence of a group of genus 1,  observe that 
not all Baumslag-Solitar groups can be subgroups of a 3-manifold group \cite{Sh}. 
The following is an immediate consequence of this fact and Theorem \ref{thm:3man}.


\begin{thm}
 There exist groups of genus 1.\qed
  \end{thm}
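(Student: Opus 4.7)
The plan is to exhibit a concrete group of genus exactly $1$, using the Baumslag--Solitar examples already discussed in the paper. The idea is to sandwich the genus of a suitable $BS(m,n)$ between $1$ and $1$: an upper bound of $1$ comes from the explicit construction illustrated in Figure~\ref{fig:BS}, and a lower bound of $1$ comes from Theorem~\ref{thm:3man} together with Shalen's result.

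First I would fix a pair $(m,n)$ with $|m|,|n|>1$ for which $BS(m,n) = \langle x_1, x_2 \mid x_1 x_2^m x_1^{-1} x_2^{-n} \rangle$ is known, by \cite{Sh}, not to embed in the fundamental group of any compact $3$-manifold. In particular $BS(m,n)$ is itself not a $3$-manifold group. Next, I would invoke the preceding discussion: the given one-relator presentation realises $\Omega_P$ as a surface of genus~$1$, so the $T$-genus, and hence the genus, of $BS(m,n)$ is at most~$1$.

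For the matching lower bound, I would apply Theorem~\ref{thm:3man}: if $BS(m,n)$ had genus $0$, it would be the fundamental group of a compact orientable $3$-manifold, contradicting \cite{Sh}. Hence the genus is at least $1$, and combined with the upper bound it equals~$1$, which proves the theorem.

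The only real content is the appeal to \cite{Sh}; everything else is bookkeeping relative to results already established. I do not anticipate an obstacle, but I would be careful to state explicitly the values of $m,n$ being used (e.g.\ the standard $m=2,\,n=3$ featured in Figure~\ref{fig:BS}) and to point out that one needs $|m|,|n|$ not both equal to $1$ so that the genus upper bound of $1$ given by the construction is genuine rather than degenerate.
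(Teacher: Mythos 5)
Your proposal is correct and follows essentially the same route as the paper: the upper bound of $1$ comes from the explicit ribbon surface for the one-relator Baumslag--Solitar presentation (Figure~\ref{fig:BS}), and the lower bound comes from combining Theorem~\ref{thm:3man} with the fact from \cite{Sh} that suitable $BS(m,n)$ are not (subgroups of) compact $3$-manifold groups. Your added care about choosing $(m,n)$ so that the Shalen obstruction actually applies is a reasonable explicitness the paper leaves implicit, but it is not a different argument.
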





%


\textbf{Example: $Z_6*_{Z_3}Z_6$} 

Consider the following presentations of $Z_6*_{Z_3}Z_6$:

$$
P=\langle a, b\| a^6=1,\ b^6=1,\ a^2b^{-2}=1\> \qquad\mbox{and}\qquad
Q=\langle  a,  b \ | \  a^6=1,\  a^2b^2 =1  \ \rangle.$$
We claim that $P$ has genus 2 and $Q$ genus 1.
Observe that we can always reduce exponents, provided their magnitude is greater than 2, to obtain a new presentation (of a possibly different group) having the same genus.
Hence the genera of $P$ and $Q$ are equal to the (presentation) genus of, respectively:
$$
P'=\langle a, b\| a^2=1,\ b^2=1,\ a^2b^{-2}=1\> \qquad\mbox{and}\qquad
Q'=\langle  a,  b \ | \  a^2, \ a^2b^2 =1 ,\ \ b^6 = 1 \ \rangle.$$
 Calculation shows that these presentations have genus 2 and 1 respectively.

\section{Triangulated complexes representing finitely presented groups}

Every closed (orientable) surface can be obtained by pairwise identification of edges of an even-sided polygonal 2-disc, and such a choice can be made with all vertices identified to a single vertex. Every compact surface with boundary can be similarly obtained when we allow deletion of a disc neighbourhood
of each of the vertices obtained after identification. The corresponding fundamental groups so obtained are respectively all closed surface groups, and all finitely generated free groups.

One  dimension higher, an analogue of a polygon is a 3-ball with triangulated boundary 2-sphere. Let $B_\tau$ denote such a ball with boundary triangulation $\tau$.    Recall that any triangulation of an orientable surface necessarily has an even number of triangles. 
Hence we may identify pairs of boundary triangles to obtain a quotient space, the closed orientable singular 3-manifold denoted ${\tilde B_{ \tau}} := B_\tau/$\ \   \llap {$\sim$}. This is a genuine 3-manifold if and only if its Euler characteristic is zero; equivalently, if and only if each vertex has link a surface of genus 0. Note that each pair of triangles can be identified in three possible ways, leading to $3^n(2n!)/2^nn!$ (possibly different) constructions from a triangulation with $2n$ triangles.

\begin{thm}\label{thm:polyhedra}
Every finitely presentable group is the fundamental group of a compact singular 3-manifold obtained from some ${\tilde B_{ \tau}}$ by deleting a neighbourhood of all but one vertices. 
Every  finitely presentable group which is a compact 3-manifold group is the fundamental group obtained from  some ${\tilde B_{ \tau}}$ by deleting a neighbourhood of all  vertices. 
  \end{thm}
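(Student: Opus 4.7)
The plan is to realise the singular 3-manifolds produced by Theorem \ref{thm:realisable} (for the general statement) and a given compact orientable 3-manifold $M$ (for the second statement) as $\tilde B_\tau$ with certain vertex neighbourhoods deleted. The key tool is a standard dual spanning-tree collapse: given any triangulated closed singular 3-manifold $Y$, form the graph $\Gamma$ whose nodes are the tetrahedra of $Y$ and whose edges are the face-pairings, and choose a spanning tree $T \subseteq \Gamma$. One builds the union of tetrahedra along $T$ inductively, adding one tetrahedron at a time; since $T$ is acyclic, each new tetrahedron is attached to the current 3-ball along a single triangular face in its boundary, and so the union remains a 3-ball at every stage. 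The final 3-ball $B_\tau$ inherits a boundary triangulation $\tau$, and the face-pairings coming from the non-tree edges of $\Gamma$ are exactly the identifications that recover $Y = \tilde B_\tau$.

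For the first statement, start with $X_P$ as in Theorem \ref{thm:realisable}: a compact singular 3-manifold with one singular vertex $\omega$ (link $\Omega$) and boundary a closed orientable surface $\partial X_P$. Cone each component of $\partial X_P$ to a new vertex $v_i$, producing a closed singular 3-manifold $Y$ whose only possibly-singular vertices are $\omega$ and those $v_i$ coning off positive-genus components. Choose a triangulation of $Y$ in which $\omega, v_1, \ldots, v_k$ appear as vertices, and apply the spanning-tree collapse to realise $Y = \tilde B_\tau$. Now delete a small open neighbourhood of every vertex of $\tau$ except $\omega$. Removing the open cone over each $v_i$ restores the corresponding component of $\partial X_P$, and any additional triangulation vertex has $S^2$ link (since $Y$ is a genuine 3-manifold away from $\omega$ and the $v_i$), so removing its neighbourhood merely introduces a spherical boundary component, leaving the fundamental group unchanged. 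The outcome is a compact singular 3-manifold with one singular vertex $\omega$ and fundamental group $\pi_1(X_P) = G$.

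For the second statement, the same construction applies with $X_P$ replaced by $M$. Cone each component of $\partial M$ to a new vertex, yielding a closed singular 3-manifold $M'$ whose singular vertices are exactly the cone vertices. Triangulate, apply the spanning-tree collapse to write $M' = \tilde B_\tau$, and delete a neighbourhood of every vertex of $\tau$. All the singular points of $M'$ lie among the deleted cone vertices, so no singularities survive; the deletion of any other vertex removes either a ball around an interior vertex of the triangulation or a half-ball around a vertex of $\partial M$, and neither of these operations changes the fundamental group. The result is a compact orientable 3-manifold with fundamental group $\pi_1(M) = G$.

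The main technical obstacle is the dual spanning-tree collapse itself: one must verify that collapsing tetrahedra along a tree of face-pairings produces a genuine 3-ball rather than something with handles, and this is where the restriction to tree edges is essential — each successive tetrahedron is glued to the previous ball along exactly one triangular face in each boundary, and the gluing of two 3-balls along a disc in their boundaries is again a 3-ball. The secondary point is checking that extra (non-cone, non-$\omega$) vertices introduced by the triangulation all have spherical links, so that deleting their neighbourhoods does not disturb the fundamental group; this is automatic because, by construction, $Y$ and $M'$ are 3-manifolds away from the designated vertices.
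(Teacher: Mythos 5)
Your proposal is correct and follows essentially the same route as the paper: cone off the remaining boundary components to get a closed singular 3-manifold, triangulate, take a maximal (spanning) tree in the dual 1-skeleton to assemble the tetrahedra into a ball $B_\tau$ whose residual face-pairings recover the complex, and then delete vertex neighbourhoods, using the fact that all vertices other than the designated cone points have 2-sphere links. Your write-up is somewhat more explicit than the paper's about why the tree collapse yields a genuine ball, but the argument is the same.
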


\begin{proof} Given a finite presentation $P$ for some group $G$, we obtain from the construction of Section 2 a 3-manifold $M_P$ with boundary components $\Omega_i,\ i= 0,1,\dots , b$, with one boundary component $\Omega_0$ coned to a  point $\omega_0$. Every compact 3-manifold can be triangulated as a simplicial complex, inducing a triangulation of each boundary component. Now, cone each other boundary component $\Omega_i, \ i\not= 0$,   to its own   distinct point $\omega_i$. The resulting singular 3-manifold $M_*$ is a union of tetrahedra with pairwise boundary triangle identifications. Every vertex in this complex has a neighbourhood which is a cone on an orientable surface, necessarily a 2-sphere unless the surface is a boundary component $\Omega_i$ of the constructed singular 3-manifold $M_P$. Deleting vertices with 
ball neighbourhoods does not effect the fundamental group; deleting vertices $\omega_i, \ i\not=0$,  yields a space with the given fundamental group; deleting all vertices yields an orientable 3-manifold.

Choose a maximal tree in the dual 1-skeleton;
the tetrahedra corresponding to the vertices of the tree glue together to create a triangulated ball, with induced triangulation of its boundary sphere.  
 Thus we find that the complex  can be obtained as an identification space of a triangulated ball by pairwise identification of boundary faces. Deleting all $\omega_i, \ i\not= 0$, gives a space with fundamental group isomorphic to  $G$. 
\end{proof}

\begin{rem}
Every finitely presented group arises as the fundamental group of a singular 3-manifold obtained by partial pair identification of a triangulated ball.
\end{rem}

\section{Discussion and questions}  
 
\subsection{\bf Calculating genus}
 
 At this stage, we only know that groups of genus 0 and genus 1 exist. 
 Given a presentation, we know that
the genus of the presentation is bounded above by
$\frac{1}{2}(l+1)-n$ (where $n$ is the number of generators and $l$ is the length of the presentation), and that we may assume that 
all powers can by reduced mod 2 to be of the form $x_i^k,\ k= \pm1, \pm 2$.
Note that we do not claim that $x_i^{-1}$ can be replaced by $x_i^1$, or that $x_i^2$ can be replaced by $x_i^0$. It is also clear that there exist \emph{presentations} of arbitrarily large genus involving only these restricted exponents.

{\bf Problem:}  Clarify the complexity of determining :
\begin{enumerate}
\item The existence of an algorithm to determine the genus of a group.  Given that a group has genus at most 2, how do we determine that it cannot have genus 1? Compact 3-manifold groups, those of genus 0,  may enjoy certain properties, such as residual finiteness, which other groups may not. For example, all Baumslag-Solitar groups have genus at most 1, but not all are residually finite. We know of no invariant which prevents  a group from having genus at most 1. The classification of surfaces by Euler characteristic or curvature properties, manifest in properties of their own fundamental groups,  suggests there may be different properties of groups of genus 0, genus 1, and genus greater than 1;

 \item The existence of an algorithm to determine whether two groups of the same genus are isomorphic. Note that Perelman's work resolving Thurston's Geometrization Conjecture  pertains to the effective enumeration classification of 
compact 3-manifolds, and hence of genus 0 groups.

\end{enumerate}

The isomorphism problem for finitely presentable groups is algorithmically unsolvable. Hence one or both of the two preceding problems must be unsolvable. We believe that the first is most likely unsolvable, and that there may be  some chance that an algorithm may exist to solve the second problem. This will involve understanding the extent to which known 3-manifold techniques extend to the singular case.

Concerning the first problem, it is clear that the difficulty lies in the  combinatorics
of the construction, related to Tietze transformations. Reordering generators leaves genus unchanged, but reordering relations may do so, but  computably. Generally, Tietze transformations involving reordering,  duplication of a relation, taking an inverse, conjugation of a relation by a given generator, or replacing a relation by its product with another are also operations for which an algorithm to determine genus should be easy to find. It seems to the authors that the main difficulty lies in the addition or deletion of a relation involving some arbitrary word in given generators.

\bigskip

 Topological constructions on surfaces also give insight into different presentations 
 arising from normal forms of 3-manifold spines: a generic spine of a 3-manifold
 is a 2-complex with edges of degree 3, and with the link of any vertex forming the 1-skeleton of a tetrahedron. The following can be easily proved using Tietze transformations;
 what is of interest is that the genus of the resulting form of presentation does not change.
 
 \begin{thm}
Suppose $G$ is a finitely-presentable group with presentation of genus $g$.  Then $G$ admits a presentation yielding a surface of genus $g$
 such that every generator
    appears {exactly} three times among all relations. 
 \end{thm}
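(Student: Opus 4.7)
The plan is to reach the desired presentation by a finite sequence of genus-preserving Tietze transformations. The workhorse move is a \emph{splitting}: if some generator $x_i$ appears $d_i \ge 4$ times, introduce a fresh generator $y$ with defining relation $yx_i^{-1}$ and substitute $y^{\pm 1}$ for $d_i-2$ selected occurrences of $x_i^{\pm 1}$ in the remaining relators. A direct count gives new occurrence counts $d_i = 3$ and $d_y = d_i - 1$. Iterating the move on $y$ drops its count by one per step, so after at most $d_i - 3$ splittings in total, both $x_i$ and all descendant generators appear exactly three times.

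For generators with $d_i < 3$ I would use a dual \emph{padding} move: adjoin a fresh $y$ with defining relation $yx_i^{-1}$ together with the consequence relation $y^2 x_i^{-2}$, which raises the counts to $d_i + 3$ and $3$ respectively; subsequent splitting on $x_i$ normalises its count to $3$. Note that padding is a valid Tietze transformation because the added relations are consequences of $y = x_i$, and the case $d_i = 0$ is not pathological because $y$ becomes a synonym for $x_i$, so $x_i$ is no longer a free factor. Iterating padding and splitting over all generators produces a presentation $P^\ast$ of $G$ in which every generator appears exactly three times.

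The main obstacle is verifying that these moves can be arranged to preserve the genus. I would work with the Euler-characteristic identity
\[
\chi(\Omega_P) \;=\; 2n - l + b,
\]
where $n$ is the number of generators, $l$ the total relator length, and $b$ the number of boundary circles of the ribbon surface before capping. Splitting changes $n$ by $+1$ and $l$ by $+2$; padding changes them by similarly controlled amounts. For the genus to be preserved, one needs $b$ to change by precisely the amount that cancels the changes in $n$ and $l$, and this is where the \emph{shuffle freedom} built into the definition of group genus enters: we may simultaneously reorder the ribbon endpoints on each pair of discs. I would argue that for each move there exists an explicit shuffled layout realising the new ribbon surface as a local modification of the old one -- a detour of a single ribbon through a new pair of $y$-discs -- so that old and new ribbon surfaces coincide outside a neighbourhood of the substitution, and the genus contribution inside that neighbourhood is zero. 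The plausibility rests on the fact that these moves amount to elementary subdivisions of the ribbon graph, and such subdivisions do not alter the topological type of the surface they bound.
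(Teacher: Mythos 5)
Your Tietze moves are valid and your occurrence counts are correct, but the step you yourself flag as ``the main obstacle'' --- genus preservation --- is precisely where the proof is missing, and it cannot be waved through. Writing the genus via the uncapped Euler characteristic, $2c-2g=\chi(\Omega)=2n-l+b$ (with $c$ the number of components and $b$ the number of boundary circles before capping), your splitting move leaves $2n-l$ unchanged, so you must exhibit a shuffle keeping $b$ and $c$ fixed. This is believable --- one wants to pinch each $x_i$-disc into two discs joined by the band coming from the relator $yx_i^{-1}$, with the two retained occurrences cyclically adjacent and the rerouted ends keeping their cyclic order on the $y$-discs --- but you have neither exhibited the layout nor checked that the two ribbons of $yx_i^{-1}$ really join $y^{+}$ to $x_i^{+}$ and $y^{-}$ to $x_i^{-}$ as that picture requires. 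The padding move is in worse shape: it changes $2n-l$ by $2-6=-4$, so to preserve genus you must show that the two new discs and six new ribbons coming from $yx_i^{-1}$ and $y^{2}x_i^{-2}$ create exactly four new boundary circles. Nothing in your argument establishes this, and your closing appeal to ``elementary subdivisions of the ribbon graph'' does not apply, since padding is not a subdivision: it adds a doubled edge and genuinely new complementary regions. Until these boundary-circle counts are verified move by move, the argument is incomplete at its central point.

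For contrast, the paper avoids the iteration entirely with a single move that is manifestly genus-preserving: each generator $x_i$ of degree $d_i$ is replaced by a cyclic chain of generators $x_i^{1},\dots,x_i^{d_i}$ with relations $x_i^{j}(x_i^{j-1})^{-1}$, one old occurrence being assigned to each new generator. Topologically this is truncation of the embedded ribbon graph --- each attaching disc of degree $d_i$ is replaced by $d_i$ small discs arranged around its boundary circle --- a local operation on $\Omega$ that visibly does not change the surface, so no Euler-characteristic bookkeeping is needed (the drop of $2n$ in $2n-l$ is exactly offset by the $2n$ new polygonal complementary regions). Your scheme does have one genuine advantage: the padding move handles generators of degree $0$ or $1$, which the paper's truncation silently skips. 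If you want to salvage your route, the concrete task is to draw the local ribbon configurations for one splitting and one padding and count the boundary circles; alternatively, replace both moves by the truncation move, for which the verification is immediate.
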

  
  \begin{proof}Take a finite presentation, and construct 
the 3-manifold with (possibly disconnected) distinguished surface $\Omega_0$, which is coned to $\omega$. The surface
 $\Omega_0$ contains the vertex ribbon graph consisting of pairs of discs with ribbons attached, filling the surface. 
 Normalize  the discs   by taking   unit radius discs in the plane centred at the points 
 $(1,\pm 2),\ (2,\pm 2),\ , \dots (n,\pm 2)$. 
 If generator $x_i$ has degree $d_i$, consider the $d_i$ points 
 $e^{2\pi i. k/d_i},\ k = 1, \dots , d_i, $ on the unit circle at
 $(i, -2)$. Replace the disc pair $(i, \pm 2)$ by $2d_i$ disc pairs centred at the points $e^{2\pi i. k/d_i},\ k = 1, \dots , d_i, $ on the pair of boundary circles. 
 
 This allows us to see that the group can be given a new presentation with generators
 $x_i^j  =1, \dots , n,\ j = 1, \dots , d_i$, with each occurrence of 
 $x_i^{\pm 1}$ in a relation as $(x_i^j)^{\pm 1}$ 
 together with new relations $x_i^j(x_i^{j-1})^{-1}$. Thus every 
 generator  $x_i^j$ occurs exactly 3 times.
\end{proof}

 Topologically, we have modified the structure of each attached 1-handle: the ribbons passing over the handle subdivide its boundary  annulus into a number of squares, representing new relations, and its two boundary circles into a number of arcs, giving new
 ribbons connecting, and vertices giving centres for new 1-handle attaching discs. A 1-handle becomes $d_i$ 1-handles and $d_i$ 2-handles; the two attaching discs become two boundary circles for the new ribbon surface.
 
 At the graph level, an embedded graph on a surface canonically gives rise (by 	`truncation') to an embedded 3-valent graph by replacing each vertex of degree $d>3$ by a $d$-gon, with edges of the original graph terminating at the vertices of the polygon.

  This has the following consequence:
 Take an arbitrary finite collection of polygons,
 with total number of edges a multiple of 3. Partition the edges arbitrarily into triples of edges, and assign orientations arbitrarily to all edges. Choose a generator for each triple, and label the corresponding edges. Construct a presentation
 with these generators, and with relations obtained by reading the cyclically ordered edges of polygons. All finitely presentable groups arise in this way in infinitely many ways.

\bigskip

Such a construction gives a presentation which is reminiscent of   the `utilities' problem of connecting 3 services to 3 dwellings (a standard example of the failure of $K(3,3)$ to 
be embeddable in the plane), as illustrated in Figure \ref{fig:utilities}.

\begin{figure}[htbp] 
   \centering
   \includegraphics[width=4in]{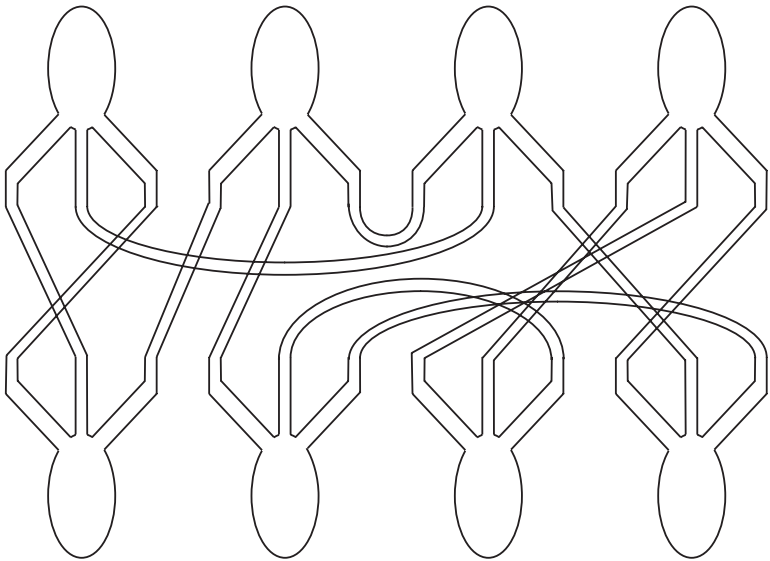}
   \caption{Regular degree 3. The surface has Euler characteristic $8 - 12 +2 = -2$: 
genus 2. One-relator presentation:}
   $\langle x_1,x_2,x_3,x_4 | 
x_1^2x_3^{-1}x_4^{-2}x_2x_3^{-1}x_4^{-1}x_3^{-1}x_2^2x_1\rangle$
   \label{fig:utilities}
\end{figure}

When genus was defined earlier, we raised the question of whether or not all shuffles were required to realize the minimal genus, or whether Tietze moves sufficed. Shuffles correspond to elements of the 
  group $S_{d_1}\times \dots \times S_{d_n} \subset S_d$,  where the symmetric group $S_{d_i}$ acts naturally on the set of points at which ribbons are connected to the attaching discs for the $i$th 1-handle. When each $d_i = 3$, considerable simplification occurs.  Generally these points are naturally partitioned into subsets corresponding to relations, since
  $d_i = d_{i,1} + \dots + d_{i,k}$, as illustrated in Figure  \ref{fig:bridson}. Tietze moves respect this partitioning, making it unclear whether the full set of shuffles is necessary. In a similar vein, 
restricted classes  or combinations of Tietze transformations, such as the Nielson moves related to the Andrews-Curtis Conjecture for balanced
presentations of the trivial group, could be used to refine a notion of genus.

\begin{figure}[htbp] 
   \centering
   \psfrag{a}{$a_{1}$}
   \psfrag{c}{$a_{2}$}
   \psfrag{b}{$b_{1}$}
   \psfrag{d}{$b_{2}$}
   \includegraphics[width=4in]{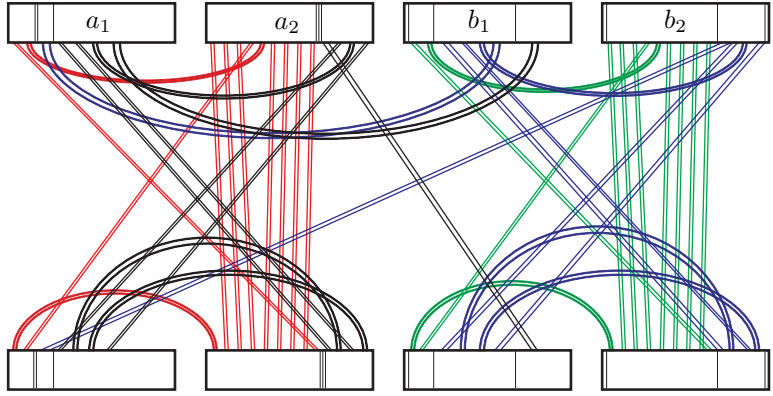} 
   \caption{
  Bridson et al.'s examples of an aspherical group with no finite quotients, $p=4$.
 $ \langle  \ a_1, a_2, b_1,b_2 
\ | \ 
a_1^{-1}a_2^pa_1a_2^{-p-1},\ \ 
b_1^{-1}b_2^pb_1b_2^{-p-1},\ \ 
a_1^{-1}[b_2,b_1^{-1}b_2b_1],\ \ 
b_1^{-1}[a_2,a_1^{-1}a_2a_1] \ \rangle  
$
The attaching points for ribbons in the boundary of each disc $A_{\pm i}$ are naturally partitioned into $k$   subsets from left to right:
$d_i = d_{i,1} + \dots + d_{i,k}$. 
}
   \label{fig:bridson}
   \end{figure}~

Another approach is to understand how genus changes when we add or delete relations
to a given presentation (thereby possibly changing the group).
 A given presentation can be considered as having been formed from a 1-relator presentation, by successively adding relations. If any relation is in the normal closure of previously added ones, the group remains the same, but the genus may increase: given a 
 compact bounded surface, adding ribbons may increase or decrease the number of boundary components, but cannot decrease the genus.

\bigskip

\noindent
{\bf Plumbing Observation:} Consider two 1-relator groups, each with the same number  of generators:
$$
P_K = \langle \ x_1,\dots , x_n \  | \  r_1 \ \rangle ,\qquad 
P_L = \langle \ x_1,\dots , x_n \  | \  r_2 \ \rangle .
$$
Suppose these give rise to surfaces $\Omega_K,  \ \Omega_L$, each with $2n$ discs and some number of ribbons. 
Then the presentation 
$$
P_G := \langle \ x_1,\dots , x_n \  | \  r_1, \  \  r_2 \ \rangle 
$$
gives rise to the surface $\Omega_G$  obtained canonically by plumbing
 the surfaces $\Omega_K,  \ \Omega_L$: corresponding discs $K_{\pm i}, \ L_{\pm i}$ are identified with discs $G_{\pm i}$ by a homeomorphism which maintains disjointness of attaching arcs for ribbons, with arcs for $r_2$ attached   anticlockwise from those of $r_1$ attached to $G_{+i}$.
The   genus possibilities for the 2-relator case, in turn created by plumbing 1-relator groups,
each of which giving a lower bound for the genus.

\subsection{Refinements based on 3-manifold and surface concepts}

Given a finitely presentable group $G$, there are many compact singular 3-manifolds having $G$ as fundamental group, with 0, 1 or more singular points and some number of boundary components. 
This singular 3-manifold arises from an underlying compact 3-manifold by partioning its boundary components into two subsets, and coning each connected boundary component in one set to a distinct point. 
To each singular point we associate the genus of its link. We can thus define genus to be the sum of genera of all singular points. 
Note that we can always choose to have a single singular point; can we lower the genus by allowing more? 

 In addition to such a total genus, we can consider the number of singular points, the number of boundary components, and the genus of each of these. Moreover, we might also consider the incompressibility of each boundary component in the underlying compact 3-manifold complementary
to all singular points.

Every closed orientable 3-manifold can be obtained by surgery on a link in the 3-sphere: thus every closed 3-manifold group arises from a link-complement group by killing off one generator from each peripheral $\Z\oplus \Z$ subgroup. On the other hand, given a link complement in either $S^3$ or some other closed 3-manifold, we can create a closed singular 3-manifold by attaching a cone to each boundary torus, thereby killing each peripheral $\Z\oplus \Z$ subgroup.
Do all finitely presentable groups arise by this construction?

Similarly, we might require that the underlying 3-manifold have some geometric or other structure, such as being hyperbolic, or have some associated notion of complexity, and investigate properties of groups in terms of finer invariants of such structure.



From a complexity viewpoint, we mention that
Costantino and  Thurston have recently considered  a related converse question, that 
of defining a notion of complexity for 3-manifolds in terms of the complexity of 4-manifolds they bound \cite{CT}.

%
%

%

\bibliography{AR}
\bibliographystyle{plain}

\end{document}